\documentclass[12pt]{amsart}
\usepackage[english]{babel}
\usepackage{amsmath}
\usepackage[initials]{amsrefs}
\usepackage{amsthm}
\usepackage{anysize}
\usepackage{mathtools}
\usepackage{mathrsfs}
\usepackage{amssymb}
\usepackage{xcolor}
\usepackage{xfrac}
\usepackage{esint}
\usepackage{graphicx}
\usepackage{float}
\usepackage{array}
\usepackage{enumitem}
\usepackage{tikz-cd}
\usepackage{centernot}
\usepackage{stmaryrd}
\usepackage{comment}
\excludecomment{confidential}
\usepackage{epstopdf}

\usepackage{graphicx,color}
\usepackage{tikz,pgfplots,pgf}
\usepackage[font=small]{caption}

\newtheorem{theorem}{Theorem}[section]
\newtheorem{lemma}{Lemma}[section]

\newtheorem{corollary}{Corollary}[section]
\theoremstyle{definition}

\theoremstyle{definition}

\usepackage[english]{babel}
\usepackage{amsmath}
\usepackage{amsthm}
\usepackage{mathtools}
\usepackage{mathrsfs}
\usepackage{amssymb}
\usepackage{xcolor}
\usepackage{xfrac}
\usepackage{esint}
\usepackage{graphicx}
\usepackage{float}
\usepackage{array}
\usepackage{enumitem}
\usepackage[new]{old-arrows}
\usepackage[all,cmtip]{xy}
\usepackage{tikz-cd}
\usepackage{centernot}
\usepackage{stmaryrd}
\usepackage{comment}
\usepackage{bbm}

\newcommand{\N}{\mathbb{N}}

\numberwithin{equation}{section}

\makeatletter
\@namedef{subjclassname@2020}{\textup{2020} Mathematics Subject Classification}
\makeatother

\begin{document}

	\title{Note on infinite-dimensional \(L^p\)-spaces}

\author[D. L. Rodríguez-Vidanes]{Daniel L. Rodríguez-Vidanes}\thanks{}
\address{Grupo de Investigación de Análisis Matemático y Aplicaciones \\
	Departamento de Matemática Aplicada a la Ingeniería Industrial \\
	Escuela Técnica Superior de Ingeniería y Diseño Industrial \\
	Universidad Politécnica de Madrid \\
	Ronda de Valencia 3 \\
	Madrid, 28012, Spain.}
\email{dl.rodriguez.vidanes@upm.es}

\author[J. C. Sampedro]{Juan Carlos Sampedro} \thanks{The second author has been supported by the Research Grant PID2024--155890NB-I00 of the Spanish Ministry of Science and Innovation and by the Institute of Interdisciplinar Mathematics of Complutense University.}
\address{Departamento de Matemática Aplicada y Ciencias de la Computación \\
	Universidad de Cantabria (UC) \\
	Avenida de los Castros 46 \\
	Santander, 39005, Spain.}
\email{juancarlos.sampedro@unican.es}

\begin{abstract}
	We prove that, for every \(1\leq p<\infty\), the \(L^p\)-space of Baker's measure on \(\mathbb R^{\mathbb N}\) is isometrically isomorphic to $\ell^p(\mathfrak c,L^p[0,1])$ in ZFC. This solves in a negative manner the main problem stated in \cite{RS}.
\end{abstract}

\keywords{\(L^p\)-spaces, infinite-dimensional Lebesgue measure, isometric classification, nonlocalizable measures, cubic decomposition}

\subjclass[2020]{46B04, 28C20, 46G12}

\maketitle

\section{Introduction}

The isometric classification of \(L^p\)-spaces is a classical problem in Banach space theory and measure theory. In the localizable setting, Maharam's theorem describes the measure algebra as a direct sum of homogeneous components and therefore gives the corresponding isometric classification of the associated \(L^p\)-spaces; see, for instance, \cite{Fremlin3,La,M}. In particular, localizable \(L^p\)-spaces decompose into \(\ell^p\)-sums of atomic parts and homogeneous nonatomic building blocks.

The aim of this note is to complete the isometric classification of a natural \(L^p\)-space which lies outside the standard localizable framework: the \(L^p\)-space associated with the infinite-dimensional Lebesgue measure on \(\mathbb R^{\mathbb N}\). This measure was introduced by Baker as an infinite-dimensional analogue of Lebesgue measure \cite{RB,RB2} and further generalized in \cite{SP3}. It is translation invariant and is defined on the Borel \(\sigma\)-algebra of the product topology of \(\mathbb R^{\mathbb N}\). However, it is not \(\sigma\)-finite and, as proved in \cite{RS}, it is neither semifinite nor localizable. Thus the usual localizable form of Maharam's theorem cannot be applied directly.

Let us recall the construction. Let \(\mathcal B\) be the Borel \(\sigma\)-algebra of \(\mathbb R\), let \(\lambda\) be the usual Lebesgue measure on \((\mathbb R,\mathcal B)\), and let \(\mathcal B_\infty\) be the Borel \(\sigma\)-algebra of \(\mathbb R^{\mathbb N}\) endowed with the product topology. Equivalently, \(\mathcal B_\infty\) is generated by the cylinder sets $\bigtimes_{i=1}^{m} A_i \times \bigtimes_{i=m+1}^{\infty}\mathbb R$, $A_i\in\mathcal B$, $m\in\mathbb N$.
We denote by \(\mathcal F(\mathcal B,\lambda)\) the family of finite rectangles
\[
\mathcal F(\mathcal B,\lambda)
:=
\left\{
\bigtimes_{n\in\mathbb N} A_n:
A_n\in\mathcal B,\ 
\lambda(A_n)<\infty\ \text{for every }n,\ 
\prod_{n\in\mathbb N}\lambda(A_n)\in[0,\infty)
\right\}.
\]
For $R=\bigtimes_{n\in\mathbb N}A_n\in\mathcal F(\mathcal B,\lambda)$, we write
\[
\operatorname{vol}(R)
:=
\prod_{n\in\mathbb N}\lambda(A_n).
\]
The measure \(\mu\) is obtained by first defining an outer measure in the following way: for every \(E\subset \mathbb R^{\mathbb N}\),
\[
\mu^*(E)
:=
\inf
\left\{
\sum_{k=1}^{\infty}\operatorname{vol}(R_k):
E\subset \bigcup_{k=1}^{\infty}R_k,\
R_k\in\mathcal F(\mathcal B,\lambda)
\right\},
\]
with the convention \(\inf\varnothing=\infty\), and then restricting \(\mu^*\) to \(\mathcal B_\infty\), $\mu:=\mu^*|_{\mathcal B_\infty}$. Thus
\[
\mu\left(\bigtimes_{n\in\mathbb N}A_n\right)
=
\prod_{n\in\mathbb N}\lambda(A_n)
\]
for every \(\bigtimes_n A_n\in\mathcal F(\mathcal B,\lambda)\).

The measure \(\mu\) should therefore be regarded as a genuine infinite-dimensional version of Lebesgue measure. Nevertheless, its measure-theoretic structure is highly nonclassical. In \cite{RS} it was shown that \((\mathbb R^{\mathbb N},\mathcal B_\infty,\mu)\) is not localizable and not semifinite. It was also proved there that, although \(\mu\) admits atoms of infinite measure, its restriction to every \(\sigma\)-finite measurable subset is purely nonatomic. These facts place \(L^p(\mu)\) in a delicate region between classical nonatomic \(L^p\)-theory and nonlocalizable measure theory.

Throughout the note, given a family of Banach spaces \((X_j)_{j\in J}\), we write
\[
\left(\bigoplus_{j\in J}X_j\right)_p
:=
\left\{
(x_j)_{j\in J}:
x_j\in X_j,\
\sum_{j\in J}\|x_j\|_{X_j}^p<\infty
\right\}, \quad 
\|(x_j)_{j\in J}\|
=
\left(\sum_{j\in J}\|x_j\|_{X_j}^p\right)^{1/p}.
\]
If \(X_j=X\) for every \(j\in J\), we write
\[
\ell^p(J,X)
:=
\left(\bigoplus_{j\in J}X\right)_p.
\]
We also use the abbreviation \(L^p(E)\) for $L^p(E,\mathcal B_\infty|_E,\mu|_E)$ and $E\in\mathcal B_\infty$. 

In \cite{RS}, the authors studied the isometric structure of $L^p(\mu):=
L^p(\mathbb R^{\mathbb N},\mathcal B_\infty,\mu)$.
Under the continuum hypothesis (CH), it was proved that
\[
L^p(\mu)
\cong
\ell^p(\mathfrak c,L^p[0,1]),
\quad 1\leq p<\infty,
\]
where \(\mathfrak c\) denotes the cardinality of the continuum and $\cong$ denotes isometric isomorphism. Without assuming CH, an isometric complemented copy of \(\ell^p(\mathfrak c,L^p[0,1])\) was obtained inside \(L^p(\mu)\), using the standard cubic decomposition of \(\mathbb R^{\mathbb N}\).

We recall this decomposition. Let $I:=\mathbb Z^{\mathbb N}$.
For every \(a=(a_n)_{n\in\mathbb N}\in I\), define the unit cube
\[
\mathcal{C}_a
:=
\prod_{n\in\mathbb N}[a_n,a_n+1).
\]
Then $\mathbb R^{\mathbb N}=\bigsqcup_{a\in I} \mathcal{C}_a$ and $\mu(\mathcal{C}_a)=1$ for every $a\in I$.
For \(f\in L^p(\mu)\), set
\[
O_f
:=
\left\{
a\in I:
\int_{\mathcal{C}_a}|f|^p\,d\mu\neq 0
\right\}.
\]
The set \(O_f\) is countable as shown in \cite[Th.~3.1]{RS}. We also write $F_f:=\bigsqcup_{a\in O_f}\mathcal{C}_a$.
The visible part of \(L^p(\mu)\) is
\begin{equation}
	\label{Gp}
G^p
:=
\left\{
f\in L^p(\mu):
\int_{\mathbb R^{\mathbb N}}|f|^p\,d\mu
=
\sum_{a\in O_f}
\int_{\mathcal{C}_a}|f|^p\,d\mu
\right\}.
\end{equation}
Equivalently, \(G^p\) consists of those functions whose \(L^p\)-norm is completely detected by their restrictions to the countably many cubes in \(O_f\). The invisible part is
\[
B^p
:=
\left\{
f\in L^p(\mu):
\int_{\mathcal{C}_a}|f|^p\,d\mu=0
\quad\text{for every }a\in I
\right\}.
\]
Thus functions in \(B^p\) are invisible from the point of view of all the unit cubes \(\mathcal{C}_a\), even though their global \(L^p\)-norm may be nonzero.

The main structural result of \cite{RS} in ZFC was the decomposition
\[
L^p(\mu)=G^p\oplus B^p,
\]
where $G^p\cong \ell^p(\mathfrak c,L^p[0,1])$
isometrically, and where \(B^p\) contains an isometric complemented copy of $\ell^p(\mathfrak c,L^p[0,1])$.
Consequently, the only remaining obstruction to the full ZFC classification of \(L^p(\mu)\) was the possible size and structure of the invisible component \(B^p\).

This remaining uncertainty was formulated in \cite{RS} as a possible set-theoretic obstruction. More precisely, it was conjectured that there could exist a model of ZFC in which \((\mathbb R^{\mathbb N},\mu)\) does not admit a \(\mathfrak c\)-separable envelope, see \cite[Def.~4.4]{RS}. Together with the structural characterization obtained there, this would imply that, for \(p\neq2\), the statement $L^p(\mu)\cong \ell^p(\mathfrak c,L^p[0,1])$ is independent of ZFC.

The present note shows that this obstruction does not occur. Our main result is that the invisible component has exactly the expected isometric type.

\begin{theorem}
	Let \(1\leq p<\infty\). Then $B^p
	\cong
	\ell^p(\mathfrak c,L^p[0,1])$.
\end{theorem}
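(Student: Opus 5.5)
The plan is to build an explicit $\ell^p$-decomposition of $B^p$ into blocks, each isometric to $L^p[0,1]$, by a maximality (Zorn) argument, and then to count the blocks.

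Call a set $E\in\mathcal B_\infty$ \emph{invisible} if $0<\mu(E)<\infty$ and $\mu(E\cap\mathcal C_a)=0$ for every $a\in I$. For such $E$, every $f\in L^p(E)$, extended by zero, lies in $B^p$.

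\textbf{Step 1 (maximal family).} By Zorn's lemma, choose a maximal family $(E_j)_{j\in J}$ of invisible sets such that $\mu(E_j\cap E_k)=0$ for $j\neq k$.

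\textbf{Step 2 (the decomposition).} I claim the map
\[
\Phi\colon B^p\to\Bigl(\bigoplus_{j\in J}L^p(E_j)\Bigr)_p,\qquad \Phi(f)=(f|_{E_j})_{j\in J},
\]
is an onto isometry.

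First, for $f\in B^p$ the support $S=\{f\neq0\}$ is $\sigma$-finite, since each set $\{|f|>1/n\}$ has finite measure. The key point is that $\mu(S\cap E_j)>0$ for only countably many $j$. Indeed, if $S'\subset S$ has finite measure, then $\sum_{j\in J_0}\mu(S'\cap E_j)\le\mu(S')$ for every countable $J_0\subset J$, by almost-disjointness and countable additivity.

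Next, choose a countable $J_0$ that maximises $\mu\bigl(S'\cap\bigcup_{j\in J_0}E_j\bigr)$ over countable subfamilies, for each of the countably many finite-measure pieces $S'$ of $S$. Let $R:=S\setminus\bigcup_{j\in J_0}E_j$. Then $\mu(R\cap E_j)=0$ for every $j\in J$. Moreover, $\mu(R\cap\mathcal C_a)=0$ for all $a$, because $\int_{\mathcal C_a}|f|^p\,d\mu=0$. If $\mu(R)>0$, then some finite-measure piece of $R$ would be an invisible set almost disjoint from all the $E_j$, contradicting maximality. Hence $\mu(R)=0$, and
\[
\int|f|^p\,d\mu=\sum_{j\in J_0}\int_{E_j}|f|^p\,d\mu .
\]
So $\Phi$ is an isometry. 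It is onto because an $\ell^p$-summable family has countable support: gluing countably many pieces on almost-disjoint sets gives a Borel function in $B^p$.

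\textbf{Step 3 (each block is $L^p[0,1]$).} Each $E_j$ has finite positive measure. By \cite{RS}, $\mu$ restricted to a $\sigma$-finite set is nonatomic. Also, $\mathcal B_\infty$ is countably generated, so the measure algebra of $E_j$ is separable and nonatomic. By Carathéodory's (Maharam's) theorem it is isomorphic to that of $[0,\mu(E_j)]$, and after rescaling the norm we get $L^p(E_j)\cong L^p[0,1]$. Therefore $B^p\cong\ell^p(J,L^p[0,1])$.

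\textbf{Step 4 (counting, $|J|=\mathfrak c$).} For the upper bound, there are at most $\mathfrak c$ Borel sets, so $|J|\le\mathfrak c$. For the lower bound, \cite{RS} provides an isometric copy of $\ell^p(\mathfrak c,L^p[0,1])$ inside $B^p$. That space contains $\mathfrak c$ vectors at mutual distance $2^{1/p}$. If $|J|<\mathfrak c$, then $\ell^p(J,L^p[0,1])$ would have density at most $\max(|J|,\aleph_0)<\mathfrak c$, which is a contradiction. Hence $|J|=\mathfrak c$.

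The main obstacle I expect is Step 2: making the measure-exhaustion argument rigorous for a non-semifinite, nonlocalizable $\mu$, since the union $\bigcup_{j\in J}E_j$ need not be measurable. This is why I reduce everything to the $\sigma$-finite support of a single $f$ and use only countable subfamilies. A secondary point to check is that the nonatomicity result from \cite{RS} really applies to arbitrary invisible sets of finite measure.
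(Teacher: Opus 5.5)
Your argument is correct and follows the paper's architecture: a Zorn-maximal essentially disjoint family of finite-measure invisible sets, and a countable exhaustion of the $\sigma$-finite support of each $f\in B^p$. The paper packages the exhaustion as a lemma for a single invisible set $E$, with $J_E=\{j:\mu(E\cap D_j)>0\}$, and applies it to the level sets $\{|f|>1/m\}$. The remaining shared steps are the identification $L^p(D_j,\mu)\cong L^p[0,1]$ via separability and the nonatomicity result of \cite{RS}, and the bound $|J|\le\mathfrak c$ from $|\mathcal B_\infty|=\mathfrak c$. The paper applies the nonatomicity result to exactly such finite-measure sets, so your secondary worry is settled.

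The genuine difference is the lower bound $|J|\ge\mathfrak c$. The paper argues combinatorially with the $\mathfrak c$ essentially disjoint invisible sets $C_A$ of \cite[Theorem 3.6]{RS}. By maximality each $C_A$ meets some $D_j$ in positive measure, while each $D_j$, having finite measure, meets only countably many $C_A$ positively, so $\mathfrak c\le|J|\cdot\aleph_0$. You instead compare an isometric invariant, the density character $\max(|J|,\aleph_0)$ of $\ell^p(J,L^p[0,1])$, with a $2^{1/p}$-separated subset of $B^p$ of size $\mathfrak c$. Your route is shorter and shows that the index set of any such $\ell^p$-decomposition of $B^p$ is forced, but it uses the embedding theorem of \cite{RS} as a black box. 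You need much less than a complemented copy: the $\mathfrak c$ functions $\chi_{C_A}$ already lie in $B^p$ and are $2^{1/p}$-separated. The paper's route is more elementary and explains directly why a maximal family must be large.

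Two minor remarks. In Step 2 the ``maximising'' choice of $J_0$ is superfluous: $J_0=\{j:\mu(S\cap E_j)>0\}$, which you already showed is countable, works directly. For surjectivity you should disjointify the countably many $E_j$ involved, as the paper does with the sets $D'_{j_k}$, so that the glued function is well defined.
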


Combining this with the known identification of \(G^p\), we obtain the desired ZFC classification which improves the main result of \cite{RS}.

\begin{corollary}
	For every \(1\leq p<\infty\), $L^p(\mathbb R^{\mathbb N},\mu)
	\cong
	\ell^p(\mathfrak c,L^p[0,1])$.
\end{corollary}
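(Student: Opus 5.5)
We prove $B^p\cong\ell^p(\mathfrak c,L^p[0,1])$ by running the cubic decomposition again on the invisible part, with a different family of cubes. The natural candidates are the shifted or rescaled lattices. For each sequence $\theta=(\theta_n)\in[0,1)^{\mathbb N}$ consider the cubes
\[
\mathcal C_a^\theta:=\prod_{n\in\mathbb N}[a_n+\theta_n,a_n+\theta_n+1),\qquad a\in I .
\]
More generally, take products $\prod_n[b_n,b_n+s_n)$ with $\prod_n s_n\in(0,\infty)$. A function $f\in B^p$ vanishes on every standard cube, but its norm must be carried by some finite-volume rectangles. The reason is that $\mu$ is built from the outer measure $\mu^*$ through countable coverings by elements of $\mathcal F(\mathcal B,\lambda)$. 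So any set of finite positive measure is covered, up to arbitrarily small error, by countably many finite rectangles.

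The plan has four steps.

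\textbf{Step 1.} Show that every $f\in B^p$ has support $\{|f|>1/k\}$ that is $\sigma$-finite, of finite measure, and covered by countably many rectangles $R_j\in\mathcal F(\mathcal B,\lambda)$. Each $R_j$ is a product $\prod A_n$ with $\prod\lambda(A_n)<\infty$. Since the support sees no standard cube, $\prod\lambda(A_n\cap[a_n,a_n+1))=0$ for every $a$. The mass must therefore sit on rectangles whose side-measures do not converge to $1$ in the way standard cubes do. In particular $\sum_n|1-\lambda(A_n)|$ can diverge while the product still converges nontrivially, with sides spread over several unit intervals.

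\textbf{Step 2.} Classify these rectangles using an equivalence relation on $\mathcal F$. Two rectangles $\prod A_n$ and $\prod A'_n$ are related when $\prod_n\lambda(A_n\cap A'_n)>0$, that is, when they are "tail-equivalent" in the sense of the infinite product converging. Show the following:
\begin{itemize}
\item rectangles in different classes intersect in a null set, since the product of the intersection measures diverges to $0$;
\item each class $\kappa$ determines a $\sigma$-finite measurable region $E_\kappa$ (a countable union of rectangles from the class) on which $\mu$ is nonatomic and separable;
\item by Carathéodory's theorem, $L^p(E_\kappa)\cong L^p[0,1]$ or $L^p[0,\infty)\cong\ell^p(\mathbb N,L^p[0,1])$.
\end{itemize}
The standard cubes form exactly the classes detected by $G^p$, so $B^p$ should be the $\ell^p$-sum over all other classes:
\[
B^p\cong\Bigl(\bigoplus_{\kappa}L^p(E_\kappa)\Bigr)_p .
\]
The key identity needed is $\|f\|_p^p=\sum_\kappa\int_{E_\kappa}|f|^p\,d\mu$ for $f\in B^p$, with only countably many nonzero terms. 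This is the analogue of \cite[Th.~3.1]{RS}, proved the same way from the covering definition of $\mu^*$.

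\textbf{Step 3.} Count the classes. The number of classes is at most $|\mathcal F|=\mathfrak c$. It is at least $\mathfrak c$ because the copy of $\ell^p(\mathfrak c,L^p[0,1])$ already known to lie in $B^p$ forces $\mathfrak c$ many pairwise disjoint pieces. Explicitly, shifting the $n$-th side by $\theta_n$ with $\sum\theta_n=\infty$ and with the $\theta$'s pairwise "essentially different" produces $\mathfrak c$ classes. Then $B^p\cong\ell^p(\mathfrak c,L^p[0,1])$, since $\ell^p(\mathfrak c,\ell^p(\mathbb N,L^p[0,1]))\cong\ell^p(\mathfrak c,L^p[0,1])$. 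All of this is in ZFC, as no choice of envelope beyond counting is used.

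\textbf{Main obstacle.} The hard part is Step 2's exhaustion identity: showing that the norm of an invisible $f$ is completely accounted for by the class decomposition. This must rule out "invisible for all classes" functions, which is exactly the failure of localizability. I would attack it by approximating $\int|f|^p\,d\mu$ through the definition of $\mu^*$ on the level sets $\{|f|^p>t\}$. A near-optimal countable cover by finite rectangles lets one replace each rectangle by its class region. Then show, using disjointness across classes, that the cover's volume is within $\varepsilon$ of $\sum_\kappa\mu(E_\kappa\cap\{|f|^p>t\})$.
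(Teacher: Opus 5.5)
Your route, decomposing into classes of rectangles instead of using a Zorn-maximal family of invisible sets, differs from the paper's and is repairable. As written, though, the step that carries all the weight is missing, and the relation it rests on is not an equivalence relation.

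\textbf{The relation is not transitive.} The sets $[0,\tfrac12)\times[0,1)^{\mathbb N}$ and $[\tfrac12,1)\times[0,1)^{\mathbb N}$ each meet $[0,1)^{\mathbb N}$ in measure $\tfrac12$, yet they are disjoint. So $\prod_n\lambda(A_n\cap A'_n)>0$ does not define classes. If you pass to the transitive closure, the first bullet of Step~2 becomes a tautology.

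\textbf{The key claim is only asserted.} The real content is the second bullet: each class is carried by a measurable, $\sigma$-finite set $E_\kappa$ that is a countable union of members of the class and contains every member up to a null set. A class is an uncountable family of rectangles. Without an argument that it is essentially countably generated, $E_\kappa$ need not even be measurable. This is why the identity $\|f\|_p^p=\sum_\kappa\int_{E_\kappa}|f|^p\,d\mu$ remains open in your write-up.

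\textbf{The obstacle is misdiagnosed.} It has nothing to do with non-localizability. Each level set $\{|f|>1/m\}$ has finite measure, so by the definition of $\mu^*$ it has a countable cover by finite rectangles. Once every rectangle lies a.e.\ in its class region and distinct regions are a.e.\ disjoint, \emph{any} such cover gives the identity. No near-optimal covers are needed.

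\textbf{The missing ingredient is the tail set.} For $R=\prod_nA_n$ with $0<\operatorname{vol}(R)<\infty$, let
\[
\widetilde R:=\{x:\ x_n\in A_n\text{ for all but finitely many }n\}.
\]
It contains $R$ and is the countable union of the rectangles $\prod_{n\le N}[k_n,k_n+1)\times\prod_{n>N}A_n$, with $N\in\mathbb N$ and $k\in\mathbb Z^N$. Hence $\widetilde R$ is Borel and $\sigma$-finite.

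Declare $R\approx R'$ when $\prod_{n>N}\lambda(A_n\cap A'_n)>0$ for some $N$. This is exactly the transitive closure of your relation. It is transitive because, under Baker's convention $\prod_n\max(1,\lambda(A_n))<\infty$, positivity forces $\sum_n\lambda(A'_n\setminus A_n)<\infty$.

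\begin{itemize}
\item If $R\not\approx R'$, every rectangle composing $\widetilde R\cap\widetilde{R'}$ has volume $0$, so the two regions are a.e.\ disjoint.
\item If $R\approx R'$, then on each rectangle composing $\widetilde R$ the measure $\mu$ is a rescaled product probability. Borel--Cantelli gives $\widetilde R\subset\widetilde{R'}$ a.e., and equality follows by symmetry.
\end{itemize}

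With this, Steps~2--3 go through. The lower bound in Step~3 also needs the remark that a $\sigma$-finite set contains only countably many pairwise essentially disjoint sets of positive measure.

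\textbf{Two remarks in Step~1 are inaccurate, though not essential.} First, a cover of an invisible set need not consist of invisible rectangles; only the support is invisible. Second, under Baker's convention a product in $(0,\infty)$ forces $\sum_n|1-\lambda(A_n)|<\infty$.

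\textbf{Comparison with the paper.} Once repaired, your approach yields an explicit $\ell^p$-decomposition of all of $L^p(\mu)$, not just $B^p$, into $\mathfrak c$ copies of $L^p[0,\infty)$. The cost is this product-measure analysis. The paper avoids rectangles entirely. It fixes a Zorn-maximal essentially disjoint family $\{D_j\}\subset\mathscr I$. By Chebyshev, the level sets of any $f\in B^p$ belong to $\mathscr I$. Maximality, together with the finite-measure counting argument, then places each level set a.e.\ inside countably many $D_j$.
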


The paper is organized as follows. In Section~2, we recall the cubic decomposition of \(L^p(\mu)\) and reformulate it in terms of the restriction operator to the unit cubes. This yields the \(p\)-orthogonal decomposition \(L^p(\mu)=G^p\oplus_p B^p\). In Section~3, we classify the invisible component \(B^p\) by means of a maximal essentially disjoint family of invisible finite-measure sets. We then compute the cardinality of this family and deduce the ZFC isometric classification of \(L^p(\mathbb R^{\mathbb N},\mu)\).

\section{$p$-orthogonal decomposition}

In this section, we recall the cubic decomposition introduced in \cite{RS} and express it in operator-theoretic terms. The restriction map to the unit cubes detects the visible part of \(L^p(\mu)\), while its kernel is precisely the invisible component \(B^p\). This gives a \(p\)-orthogonal decomposition of \(L^p(\mu)\). Let $I:=\mathbb Z^{\mathbb N}$ and define the cubic restriction operator
\[
\mathcal R:L^p(\mu)\longrightarrow
\left(\bigoplus_{a\in I}L^p(\mathcal{C}_a,\mu)\right)_p, \qquad \mathcal R f:=\bigl(f|_{\mathcal{C}_a}\bigr)_{a\in I}.
\]

\begin{lemma}
	The operator \(\mathcal R\) is well-defined, linear and contractive. Moreover, $B^p=\ker \mathcal R$.
\end{lemma}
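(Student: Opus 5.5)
The plan is to verify the three claims in turn: that $\mathcal R$ lands in the $\ell^p$-sum, that it is linear and norm-decreasing, and that its kernel is $B^p$.

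\textbf{Well-definedness.} First, $\mathcal R$ respects $\mu$-a.e.\ equivalence: if $f=g$ $\mu$-a.e., then $f|_{\mathcal C_a}=g|_{\mathcal C_a}$ $\mu|_{\mathcal C_a}$-a.e.\ for every $a\in I$. Each cube $\mathcal C_a$ is a countable intersection of cylinder sets, so $\mathcal C_a\in\mathcal B_\infty$. Hence the restriction $f|_{\mathcal C_a}$ is $\mathcal B_\infty|_{\mathcal C_a}$-measurable and satisfies $\int_{\mathcal C_a}|f|^p\,d\mu\le\|f\|_p^p<\infty$, so $f|_{\mathcal C_a}\in L^p(\mathcal C_a,\mu)$. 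Second, we need $\sum_{a\in I}\|f|_{\mathcal C_a}\|_p^p<\infty$. By \cite[Th.~3.1]{RS}, only the countably many indices in $O_f$ contribute. Enumerate $O_f=\{a_1,a_2,\dots\}$. Since the cubes are pairwise disjoint, countable additivity of the measure $E\mapsto\int_E|f|^p\,d\mu$ gives, for every $N$,
\[
\sum_{k=1}^N\int_{\mathcal C_{a_k}}|f|^p\,d\mu=\int_{\bigsqcup_{k\le N}\mathcal C_{a_k}}|f|^p\,d\mu\le\int_{\mathbb R^{\mathbb N}}|f|^p\,d\mu .
\]
Letting $N\to\infty$ yields $\|\mathcal Rf\|^p=\sum_{a\in O_f}\int_{\mathcal C_a}|f|^p\,d\mu\le\|f\|_p^p$. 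This proves both that $\mathcal Rf$ lies in the $\ell^p$-sum and that $\|\mathcal R\|\le 1$.

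\textbf{Linearity.} This is immediate, because restriction to each $\mathcal C_a$ is linear coordinatewise.

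\textbf{Kernel.} We have $\mathcal Rf=0$ exactly when $f|_{\mathcal C_a}=0$ in $L^p(\mathcal C_a,\mu)$ for every $a\in I$. This is equivalent to $\int_{\mathcal C_a}|f|^p\,d\mu=0$ for every $a\in I$, which is the definition of $B^p$.

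The only step that is not routine is the summability over the uncountable index set $I$. I expect this to be the main obstacle, and it is handled by invoking the countability of $O_f$ from \cite[Th.~3.1]{RS}. Once $O_f$ is countable, the disjointness of the cubes together with countable additivity give the bound directly. Note that only an inequality is available here, not an equality, because $\mu$ is not semifinite and part of the norm of $f$ may be invisible to the cubes.
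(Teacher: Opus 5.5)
Your proof is correct and follows the paper's argument. Disjointness of the cubes bounds each partial sum $\sum_{a\in F}\int_{\mathcal C_a}|f|^p\,d\mu$ by $\|f\|_{L^p(\mu)}^p$, and the kernel is read off directly from the definition of $B^p$. The only difference is that the paper takes the supremum over all finite $F\subset I$, which is by definition the unordered sum over $I$. This makes your appeal to \cite[Th.~3.1]{RS} unnecessary, since the finite-subset bound itself shows that $O_f$ is countable.
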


\begin{proof}
	Let \(f\in L^p(\mu)\). For every finite subset \(F\subset I\), since the
	sets \(\mathcal{C}_a\), \(a\in I\), are pairwise disjoint, we have
	\[
	\sum_{a\in F}\|f|_{\mathcal{C}_a}\|_{L^p(\mathcal{C}_a)}^p
	=
	\sum_{a\in F}\int_{\mathcal{C}_a}|f|^p\,d\mu
	=
	\int_{\bigsqcup_{a\in F}\mathcal{C}_a}|f|^p\,d\mu
	\leq
	\int_{\mathbb R^{\mathbb N}}|f|^p\,d\mu.
	\]
	Taking the supremum over all finite subsets \(F\subset I\), we obtain
	\[
	\sum_{a\in I}\|f|_{\mathcal{C}_a}\|_{L^p(\mathcal{C}_a)}^p
	\leq
	\|f\|_{L^p(\mu)}^p.
	\]
	Thus \(\mathcal R f\in \left(\bigoplus_{a\in I}L^p(\mathcal{C}_a,\mu)\right)_p\), and $\|\mathcal R f\|\leq\|f\|_{L^p(\mu)}$.
	Therefore \(\mathcal R\) is well-defined and contractive. Its linearity is
	immediate. Finally, $\mathcal R f=0$
	if and only if \(f|_{\mathcal{C}_a}=0\) in \(L^p(\mathcal{C}_a,\mu)\) for every \(a\in I\), that is,
	if and only if $\int_{\mathcal{C}_a}|f|^p\,d\mu=0$ for every $a\in I$.
	By definition, this is precisely the condition \(f\in B^p\). Hence $B^p=\ker\mathcal R$.
\end{proof}

We also have a canonical section of \(\mathcal R\). Indeed, if
\[
(g_a)_{a\in I}\in
\left(\bigoplus_{a\in I}L^p(\mathcal{C}_a,\mu)\right)_p,
\]
then the set $\{a\in I:\|g_a\|_{L^p(\mathcal{C}_a)}\neq0\}$
is countable. We may therefore define
\[
\mathcal S((g_a)_{a\in I})
:=
\sum_{a\in I} g_a\chi_{\mathcal{C}_a},
\]
where the above sum is in fact countable.

\begin{lemma}
	The operator $\mathcal S:
	\left(\bigoplus_{a\in I}L^p(\mathcal{C}_a,\mu)\right)_p
	\to L^p(\mu)$
	is a linear isometry and satisfies $\mathcal R\mathcal S=\operatorname{Id}$.
\end{lemma}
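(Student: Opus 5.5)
The plan is to verify, in order, that $\mathcal S$ is well defined, that it is a linear isometry, and that $\mathcal R\mathcal S=\operatorname{Id}$. For well-definedness, take $g=(g_a)_{a\in I}$ in the $\ell^p$-sum and let $J:=\{a\in I:\|g_a\|_{L^p(\mathcal C_a)}\neq 0\}$. This set is countable, since $\sum_a\|g_a\|^p<\infty$. For $a\notin J$ we have $g_a=0$ in $L^p(\mathcal C_a)$, so the term $g_a\chi_{\mathcal C_a}$ is $\mu$-a.e. zero and may be discarded. We then set $\mathcal S g:=\sum_{a\in J}g_a\chi_{\mathcal C_a}$, defined pointwise. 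Because the cubes are pairwise disjoint, at each point at most one summand is nonzero, so the series converges pointwise. It is $\mathcal B_\infty$-measurable as a countable sum of measurable functions, and its class does not depend on the chosen representatives, because a countable union of null sets is null.

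For the isometry, disjointness gives $|\mathcal Sg|^p=\sum_{a\in J}|g_a|^p\chi_{\mathcal C_a}$ pointwise. By monotone convergence (or countable additivity of $\mu$ on the disjoint family $\{\mathcal C_a\}_{a\in J}$),
\[
\int_{\mathbb R^{\mathbb N}}|\mathcal Sg|^p\,d\mu=\sum_{a\in J}\int_{\mathcal C_a}|g_a|^p\,d\mu=\|g\|^p .
\]
This shows at once that $\mathcal Sg\in L^p(\mu)$ and that $\|\mathcal Sg\|=\|g\|$. The one point needing care is that $|\mathcal Sg|^p$ vanishes outside the countable union $F=\bigsqcup_{a\in J}\mathcal C_a$. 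The integral therefore only involves the measure restricted to subsets of $F$, and this avoids the non-semifinite pathologies of $\mu$. The integral over the complement is literally zero, since the integrand is identically $0$ there.

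For linearity, given $g,h$ and scalars $\alpha,\beta$, we work over the countable index set $J_g\cup J_h$. Pointwise we have $\mathcal S(\alpha g+\beta h)=\alpha\mathcal Sg+\beta\mathcal Sh$ a.e., because on each cube both sides reduce to $\alpha g_a+\beta h_a$.

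Finally, for each $b\in I$ we compute $(\mathcal Sg)|_{\mathcal C_b}$. If $b\in J$, disjointness gives $(\mathcal Sg)|_{\mathcal C_b}=g_b$. If $b\notin J$, then $(\mathcal Sg)|_{\mathcal C_b}=0=g_b$ in $L^p(\mathcal C_b)$. Hence $\mathcal R\mathcal S g=g$. None of these steps is a serious obstacle. The only subtle points are the well-definedness modulo null sets and the justification of the norm identity, both of which rest on countability of $J$ together with countable additivity.
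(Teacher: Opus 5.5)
Your proof is correct and follows essentially the same route as the paper. Pairwise disjointness of the cubes, together with the countability of $\{a\in I:\|g_a\|_{L^p(\mathcal C_a)}\neq0\}$, gives the norm identity, linearity, and $\mathcal R\mathcal S=\operatorname{Id}$; you also make explicit the routine details the paper leaves implicit (choice of representatives, measurability, monotone convergence), though monotone convergence holds in every measure space, so no special care about non-semifiniteness is actually needed.
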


\begin{proof}
	Let \((g_a)_{a\in I}\in \left(\bigoplus_{a\in I}L^p(\mathcal{C}_a,\mu)\right)_p\).
	Since the supports \(\mathcal{C}_a\) are pairwise disjoint, we have
	\[
	\|\mathcal S((g_a)_{a\in I})\|_{L^p(\mu)}^p
	=
	\sum_{a\in I}\int_{\mathcal{C}_a}|g_a|^p\,d\mu
	=
	\sum_{a\in I}\|g_a\|_{L^p(\mathcal{C}_a)}^p.
	\]
	Hence \(\mathcal S\) is an isometry. Linearity follows from the same disjointness
	and from the fact that the supports involved are countable. Moreover, for every \(a\in I\),
	\[
	\mathcal S((g_b)_{b\in I})|_{\mathcal{C}_a}=g_a
	\quad\text{in }L^p(\mathcal{C}_a,\mu).
	\]
	Thus $\mathcal R\mathcal S((g_a)_{a\in I})=(g_a)_{a\in I}$.
	This proves the assertion.
\end{proof}

It is immediate from the definitions that \(\operatorname{Im}\mathcal S\) coincides with the space \(G^p\) introduced in \eqref{Gp}. Indeed, if \(f\in \operatorname{Im}\mathcal S\), then \(f\) is supported on a countable union of cubes and its \(L^p\)-norm is the sum of the \(L^p\)-norms of its restrictions to those cubes. Conversely, if \(f\in G^p\), then
\[
\int_{\mathbb R^{\mathbb N}\setminus F_f}|f|^p\,d\mu=0,
\]
and hence \(f=\mathcal S\mathcal R f\). Therefore \(G^p=\operatorname{Im}\mathcal S\). The previous two lemmas imply that
\[
L^p(\mu)=G^p\oplus B^p.
\]
More precisely, if \(P:=\mathcal S\mathcal R\), then \(P\) is a contractive
projection onto \(G^p\) and \(\ker P=B^p\). Moreover, the decomposition is
\(p\)-orthogonal:
\[
\|g+b\|_{L^p(\mu)}^p
=
\|g\|_{L^p(\mu)}^p+\|b\|_{L^p(\mu)}^p
\quad(g\in G^p,\ b\in B^p).
\]
Indeed, if \(g\in G^p\), then \(g\) is supported on a countable union of cubes,
say $\bigsqcup_{a\in A}\mathcal{C}_a$, $A\subset I$ countable.
If \(b\in B^p\), then \(b=0\) in \(L^p(\mathcal{C}_a,\mu)\) for every \(a\in I\). In
particular, \(b=0\) almost everywhere on \(\bigsqcup_{a\in A}\mathcal{C}_a\). Thus \(g\)
and \(b\) are disjointly supported modulo null sets, which gives the displayed
identity.

Since each cube \(\mathcal{C}_a\) is measure-isomorphic to the Hilbert cube
\([0,1)^{\mathbb N}\) endowed with product Lebesgue measure, we have $L^p(\mathcal{C}_a,\mu)\cong L^p[0,1]$
isometrically for every \(a\in I\). Since \(|I|=\mathfrak c\), it follows that
\[
G^p\cong \ell^p(\mathfrak c,L^p[0,1]).
\]

\section{Classification of $B^p$ and conclusion}

We now identify the invisible component \(B^p\). The main idea is to decompose it by means of a maximal family of pairwise essentially disjoint invisible finite-measure sets. This reduces the classification of \(B^p\) to an \(\ell^p\)-sum of classical separable nonatomic \(L^p\)-spaces.

Define the family of finite invisible sets by
\[
\mathscr I:=
\left\{
E\in\mathcal B_\infty:
0<\mu(E)<\infty,\ 
\mu(E\cap \mathcal{C}_a)=0\text{ for every }a\in I
\right\}.
\]
The family \(\mathscr I\) is non-empty. More precisely, the almost disjoint family\footnote{Recall that two infinite subsets $A, B\subset \N$ are called \textit{almost disjoint} if $|A\cap B|<\infty$.} construction in \cite[Theorem 3.6]{RS} provides a family $(C_A)_{A\in\mathcal A}\subset\mathscr I$
such that \(|\mathcal A|=\mathfrak c\), \(\mu(C_A)=1\) for every \(A\in\mathcal A\), and $\mu(C_A\cap C_B)=0$, $A\neq B$. We shall use this family again below to compute the cardinality of the maximal decomposition.

\begin{lemma}
	There exists a maximal essentially disjoint family $\mathcal D=\{D_j:j\in J\}\subset \mathscr I$,
	that is, $\mu(D_i\cap D_j)=0$, $i\neq j$,
	and \(\mathcal D\) is maximal with respect to this property.
\end{lemma}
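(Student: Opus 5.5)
The plan is a direct application of Zorn's lemma to the collection
\[
\mathfrak P:=\bigl\{\mathcal E\subset\mathscr I:\ \mu(E\cap F)=0\text{ for all }E,F\in\mathcal E,\ E\neq F\bigr\},
\]
partially ordered by inclusion. First we check that \(\mathfrak P\) is non-empty. The empty family belongs to it trivially. Since \(\mathscr I\neq\varnothing\), any singleton \(\{E\}\) with \(E\in\mathscr I\) is also in \(\mathfrak P\), as is the family \(\{C_A:A\in\mathcal A\}\) from \cite[Theorem 3.6]{RS}. Starting Zorn's lemma above the latter family will be convenient later, because the resulting maximal family then contains it.

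Next, let \(\mathcal T\subset\mathfrak P\) be a chain and set \(\mathcal U:=\bigcup\mathcal T\). Then \(\mathcal U\subset\mathscr I\). Given two distinct \(E,F\in\mathcal U\), pick \(\mathcal E_1,\mathcal E_2\in\mathcal T\) with \(E\in\mathcal E_1\) and \(F\in\mathcal E_2\). Since \(\mathcal T\) is totally ordered, one of these contains the other, so \(E,F\) lie in a common member of \(\mathfrak P\), and hence \(\mu(E\cap F)=0\). Thus \(\mathcal U\in\mathfrak P\) is an upper bound for the chain. Zorn's lemma then yields a maximal element \(\mathcal D\in\mathfrak P\). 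Indexing it by \(J:=\mathcal D\) and writing \(D_j:=j\), we obtain the statement: the sets are pairwise essentially disjoint, and no further set of \(\mathscr I\) can be added while keeping this property.

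No step here is a genuine obstacle. The one point requiring care is bookkeeping: the essential disjointness relation is imposed only between distinct members of the family. If two distinct Borel sets \(E\neq F\) in \(\mathscr I\) differ by a null set, they have \(\mu(E\cap F)>0\), so they cannot both belong to the family. This does not affect the Zorn argument. Maximality should be read as follows: every \(E\in\mathscr I\) satisfies \(\mu(E\cap D_j)>0\) for some \(j\in J\).
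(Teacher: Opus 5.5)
Your proof is correct and follows the same Zorn's lemma argument as the paper: the union of a chain of pairwise essentially disjoint subfamilies of $\mathscr I$ is again such a subfamily and is an upper bound. Your extra remarks are accurate but not needed for the lemma itself. These are nonemptiness via the empty family, the option of starting above $\{C_A\}$ (which would make $|J|\geq\mathfrak c$ immediate later), and the reformulation of maximality as ``every $E\in\mathscr I$ meets some $D_j$ in positive measure''.
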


\begin{proof}
	We apply Zorn's lemma. Consider the collection of all subfamilies
	\(\mathcal F\subset\mathscr I\) such that
	\[
	\mu(E\cap F)=0
	\quad(E,F\in\mathcal F,\ E\neq F).
	\]
	This collection is partially ordered by inclusion. Let \((\mathcal F_\alpha)_{\alpha\in A}\) be a chain. Then
	\[
	\mathcal F:=\bigcup_{\alpha\in A}\mathcal F_\alpha
	\]
	is again essentially disjoint. Indeed, if \(E,F\in\mathcal F\), then
	\(E\in\mathcal F_\alpha\) and \(F\in\mathcal F_\beta\) for some \(\alpha,\beta\).
	Since the family is a chain, one of \(\mathcal F_\alpha,\mathcal F_\beta\)
	contains the other. Hence \(E\) and \(F\) both belong to one essentially
	disjoint family, and therefore \(\mu(E\cap F)=0\) whenever \(E\neq F\). Thus every chain has an upper bound, and Zorn's lemma gives a maximal family.
\end{proof}

Fix such a maximal family $\mathcal D=\{D_j:j\in J\}$.

\begin{lemma}
	Let \(E\in\mathscr I\). Then there exists a countable subset \(J_E\subset J\)
	such that
	\begin{equation}
		\label{E1}
	\mu\left(E\setminus\bigcup_{j\in J_E}D_j\right)=0.
	\end{equation}
\end{lemma}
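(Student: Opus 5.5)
Since $E$ has finite measure, I will work inside the finite measure space $(E,\mathcal B_\infty|_E,\mu|_E)$ and show that only countably many $D_j$ meet $E$ in positive measure, and that these cover $E$ up to a null set.

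\textbf{Countability.} Let $J_E:=\{j\in J:\mu(E\cap D_j)>0\}$. The sets $E\cap D_j$, $j\in J$, are pairwise essentially disjoint subsets of $E$. Fix $n\in\mathbb N$ and finitely many indices $j_1,\dots,j_m$ with $\mu(E\cap D_{j_k})>1/n$. Since the pairwise intersections are null, inclusion--exclusion (or disjointification, removing the null overlaps) gives
\[
\sum_{k=1}^m\mu(E\cap D_{j_k})=\mu\Bigl(\bigcup_{k=1}^m E\cap D_{j_k}\Bigr)\le\mu(E)<\infty,
\]
so $m<n\mu(E)$. Hence each set $\{j:\mu(E\cap D_j)>1/n\}$ is finite, and $J_E$, being their union over $n$, is countable.

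\textbf{Covering.} Put $U:=\bigcup_{j\in J_E}D_j$, which is measurable because $J_E$ is countable, and $N:=E\setminus U$. I claim $\mu(N)=0$. Suppose instead $\mu(N)>0$. Then $0<\mu(N)\le\mu(E)<\infty$, and for every $a\in I$ we have $\mu(N\cap\mathcal C_a)\le\mu(E\cap\mathcal C_a)=0$, so $N\in\mathscr I$. For $j\in J_E$, $N\cap D_j=\varnothing$. For $j\notin J_E$, $\mu(N\cap D_j)\le\mu(E\cap D_j)=0$. Thus $\mathcal D\cup\{N\}$ is an essentially disjoint subfamily of $\mathscr I$.

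It remains to check that $N\notin\mathcal D$, so that this family strictly enlarges $\mathcal D$. If $N=D_{j_0}$ for some $j_0$, then $\mu(E\cap D_{j_0})=\mu(N)>0$, so $j_0\in J_E$. But then $N\subset D_{j_0}\subset U$, which contradicts $N\cap U=\varnothing$ together with $\mu(N)>0$. So $\mathcal D\cup\{N\}$ strictly enlarges $\mathcal D$, contradicting the maximality in the previous lemma. Hence $\mu(N)=0$, which is exactly \eqref{E1}.

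\textbf{Main obstacle.} The covering step is not delicate; the only care needed is to confirm that $N$ inherits invisibility and finite measure from $E$, so that $N$ lies in $\mathscr I$. The point that genuinely uses $\mu(E)<\infty$ is the countability step. Without finite measure, the non-semifinite nature of $\mu$ would block this argument, since the overlaps $E\cap D_j$ could no longer be counted off against a finite total.
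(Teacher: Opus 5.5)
Your proof is correct and follows the paper's argument essentially step by step: the same index set $J_E=\{j\in J:\mu(E\cap D_j)>0\}$, the same finiteness of $\{j:\mu(E\cap D_j)>1/n\}$ coming from $\mu(E)<\infty$, and the same maximality contradiction applied to $E\setminus\bigcup_{j\in J_E}D_j$. Your extra check that this leftover set is not already a member of $\mathcal D$ is a small point the paper leaves implicit, and your argument for it is sound.
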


\begin{proof}
	Define $J_E:=\{j\in J:\mu(E\cap D_j)>0\}$. We first show that \(J_E\) is countable. For \(m\in\mathbb N\), set $J_{E,m}:=
	\left\{
	j\in J:\mu(E\cap D_j)>1/m
	\right\}$.
	We claim that \(J_{E,m}\) is finite. Indeed, the sets \(E\cap D_j\),
	\(j\in J_{E,m}\), are essentially disjoint subsets of \(E\). Hence, if
	\(F\subset J_{E,m}\) is finite, then
	\[
	\mu(E)
	\geq
	\sum_{j\in F}\mu(E\cap D_j)
	>
	\frac{|F|}{m}.
	\]
	Since \(\mu(E)<\infty\), the cardinality of such \(F\) is uniformly bounded.
	Therefore \(J_{E,m}\) is finite. Now
	\[
	J_E=\bigcup_{m=1}^\infty J_{E,m},
	\]
	and hence \(J_E\) is countable. Consider
	\[
	E':=E\setminus\bigcup_{j\in J_E}D_j.
	\]
	Since \(J_E\) is countable, \(E'\in\mathcal B_\infty\). Moreover \(E'\subset E\),
	so \(\mu(E')<\infty\), and $\mu(E'\cap \mathcal{C}_a)\leq \mu(E\cap \mathcal{C}_a)=0$ for every $a\in I$. Thus, if \(\mu(E')>0\), then \(E'\in\mathscr I\).
	
	We now check that \(E'\) is essentially disjoint from every \(D_j\). If
	\(j\in J_E\), then \(E'\cap D_j=\varnothing\). If \(j\notin J_E\), then $\mu(E'\cap D_j)\leq \mu(E\cap D_j)=0$. Thus $\mu(E'\cap D_j)=0$ for every $j\in J$.
	
	If \(\mu(E')>0\), then we could add \(E'\) to \(\mathcal D\), contradicting the
	maximality of \(\mathcal D\). Hence \(\mu(E')=0\), which is precisely \eqref{E1}.
\end{proof}

\begin{lemma}
	For every \(f\in B^p\), there exists a countable subset \(J_f\subset J\) such
	that $f=0$ $\mu$-almost everywhere on $\mathbb R^{\mathbb N}\setminus\bigcup_{j\in J_f}D_j$.
\end{lemma}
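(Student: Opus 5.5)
The plan is to reduce the statement to the previous lemma by splitting \(f\) into countably many level sets. Each level set will turn out to lie in \(\mathscr I\) or be null.

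Let \(f\in B^p\) with \(f\neq 0\); the case \(f=0\) is trivial, taking \(J_f=\varnothing\). For \(k\in\mathbb N\) set
\[
E_k:=\{x\in\mathbb R^{\mathbb N}:|f(x)|>1/k\},
\]
where we fix a \(\mathcal B_\infty\)-measurable representative of \(f\). We then check the three conditions defining \(\mathscr I\):

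\begin{itemize}
\item \textbf{Finite measure.} By Chebyshev's inequality, \(\mu(E_k)\le k^p\|f\|_{L^p(\mu)}^p<\infty\).
\item \textbf{Invisibility.} For every \(a\in I\), since \(f\in B^p\) we have \(\int_{\mathcal C_a}|f|^p\,d\mu=0\). Hence \(f=0\) \(\mu\)-a.e. on \(\mathcal C_a\), and therefore
\[
\mu(E_k\cap\mathcal C_a)\le k^p\int_{\mathcal C_a}|f|^p\,d\mu=0 .
\]
\item \textbf{Positivity.} Either \(\mu(E_k)>0\), in which case \(E_k\in\mathscr I\), or \(\mu(E_k)=0\).
\end{itemize}

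In the first case the previous lemma gives a countable \(J_{E_k}\subset J\) with \(\mu\bigl(E_k\setminus\bigcup_{j\in J_{E_k}}D_j\bigr)=0\). In the second case we put \(J_{E_k}:=\varnothing\), and the same identity holds trivially.

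Now define \(J_f:=\bigcup_{k\in\mathbb N}J_{E_k}\), which is countable as a countable union of countable sets. Since \(\{f\neq0\}=\bigcup_k E_k\), we get
\[
\{f\neq0\}\setminus\bigcup_{j\in J_f}D_j\subset\bigcup_{k}\Bigl(E_k\setminus\bigcup_{j\in J_{E_k}}D_j\Bigr).
\]
The right-hand side is a countable union of \(\mu\)-null sets, so it is null. Moreover, \(\bigcup_{j\in J_f}D_j\) is a countable union of Borel sets, so all sets involved lie in \(\mathcal B_\infty\). This is exactly the assertion that \(f=0\) a.e. off \(\bigcup_{j\in J_f}D_j\).

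No step is a serious obstacle. The one point needing care is measurability and the choice of representative. Changing \(f\) on a null set changes each \(E_k\) only by a null set, which does not affect the conclusion. One must also use the level sets \(E_k\) rather than \(\{f\neq0\}\) itself: the latter may have infinite measure in this non-\(\sigma\)-finite setting, so it need not belong to \(\mathscr I\), whereas each \(E_k\) has finite measure.
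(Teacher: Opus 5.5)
Your proof is correct and is essentially the paper's own argument. Both use the level sets $\{|f|>1/k\}$, shown to have finite measure by Chebyshev and to be invisible because $f\in B^p$; both then apply the previous lemma to each non-null level set and take $J_f$ to be the countable union of the resulting index sets.
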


\begin{proof}
	For \(m\in\mathbb N\), define $E_m:=\{|f|>1/m\}$. By Chebyshev's inequality,
	\[
	\mu(E_m)
	\leq
	m^p\int_{\mathbb R^{\mathbb N}}|f|^p\,d\mu
	<\infty.
	\]
	Moreover, for every \(a\in I\),
	\[
	\frac1{m^p}\mu(E_m\cap \mathcal{C}_a)
	\leq
	\int_{E_m\cap \mathcal{C}_a}|f|^p\,d\mu
	\leq
	\int_{\mathcal{C}_a}|f|^p\,d\mu
	=
	0,
	\]
	because \(f\in B^p\). Therefore \(\mu(E_m\cap \mathcal{C}_a)=0\) for every \(a\in I\).
	
	If \(\mu(E_m)>0\), then \(E_m\in\mathscr I\). By the previous lemma, there
	exists a countable set \(J_m\subset J\) such that
	\[
	\mu\Big(E_m\setminus\bigcup_{j\in J_m}D_j\Big)=0.
	\]
	If \(\mu(E_m)=0\), we simply take \(J_m=\varnothing\). Set $J_f:=\bigcup_{m=1}^\infty J_m$.
	Then \(J_f\) is countable. Since $\{f\neq0\}=\bigcup_{m=1}^\infty E_m$,
	we obtain
	\[
	\mu\Big(
	\{f\neq0\}\setminus\bigcup_{j\in J_f}D_j
	\Big)=0.
	\]
	This proves the claim.
\end{proof}

We can now identify \(B^p\).

\begin{theorem}
	\label{Th3.1}
	It holds that $B^p\cong
	\left(\bigoplus_{j\in J}L^p(D_j,\mu)\right)_p$.
\end{theorem}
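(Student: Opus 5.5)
We will write down the natural restriction map and show directly that it is a surjective linear isometry. Define
\[
T:B^p\longrightarrow \Big(\bigoplus_{j\in J}L^p(D_j,\mu)\Big)_p,\qquad Tf:=(f|_{D_j})_{j\in J}.
\]
The first step is to replace the family $\mathcal D$ by a genuinely disjoint one. Given a countable $J_0\subset J$, enumerate it as $(j_k)_k$ and set $\widetilde D_{j_k}:=D_{j_k}\setminus\bigcup_{l<k}D_{j_l}$. These are Borel, pairwise disjoint, and $\mu(D_{j_k}\setminus\widetilde D_{j_k})=0$ because the $D_j$ are essentially disjoint. Since everything will be computed through countable subfamilies, this modification is harmless.

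\emph{Well-definedness and isometry.} Let $f\in B^p$. By the previous lemma there is a countable $J_f$ with $f=0$ a.e.\ off $\bigcup_{j\in J_f}D_j$. For $j\notin J_f$ we have $\mu(D_j\cap D_i)=0$ for all $i\in J_f$, and $J_f$ is countable. Hence $D_j\cap\bigcup_{i\in J_f}D_i$ is null, so $f|_{D_j}=0$ in $L^p(D_j)$. Using the disjointification of $(D_j)_{j\in J_f}$ and countable additivity,
\[
\int|f|^p\,d\mu=\sum_{j\in J_f}\int_{\widetilde D_j}|f|^p\,d\mu=\sum_{j\in J}\|f|_{D_j}\|_{L^p(D_j)}^p .
\]
Thus $T$ is a well-defined linear isometry. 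Linearity is clear.

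\emph{Surjectivity.} This is the main point. Take $(g_j)_{j\in J}$ in the $\ell^p$-sum. Only countably many $g_j$ are nonzero, say for $j\in J_0$. Define
\[
f:=\sum_{j\in J_0}g_j\chi_{\widetilde D_j}.
\]
This is Borel, since it is a countable sum over disjoint sets, and $\|f\|_p^p=\sum_j\|g_j\|_p^p<\infty$, so $f\in L^p(\mu)$. For $j\in J_0$ we have $f|_{D_j}=g_j$ a.e., because $D_j\setminus\widetilde D_j$ is null. For $j\notin J_0$, $f|_{D_j}=0$ by essential disjointness. It remains to check that $f\in B^p$. For each $a\in I$,
\[
\int_{\mathcal C_a}|f|^p\,d\mu\le\sum_{j\in J_0}\int_{\mathcal C_a\cap D_j}|g_j|^p\,d\mu=0,
\]
because $\mu(\mathcal C_a\cap D_j)=0$ for every $D_j\in\mathscr I$. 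Hence $Tf=(g_j)_j$, and $T$ is onto.

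\emph{Main obstacle.} The delicate point is ensuring that all computations genuinely reduce to countable subfamilies, since $\mu$ is not semifinite. In particular, $\int|f|^p$ must not pick up any mass outside $\bigcup_{j\in J_f}D_j$. This is exactly what the preceding lemma guarantees, together with the finiteness of each $\mu(D_j)$, so we will lean on it rather than on any localizability argument.
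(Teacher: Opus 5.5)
Your proposal is correct and follows the paper's proof essentially step by step. It uses the same restriction operator $T$, the same countable set $J_f$ from the preceding lemma to obtain the isometry, and the same disjointification of the countable support to build the preimage $f=\sum_{j\in J_0}g_j\chi_{\widetilde D_j}$, verifying $f\in B^p$ through $\mu(\mathcal C_a\cap D_j)=0$.
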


\begin{proof}
	Define
	\[
	T:B^p\longrightarrow
	\left(\bigoplus_{j\in J}L^p(D_j,\mu)\right)_p, \qquad 
	T(f):=(f|_{D_j})_{j\in J}.
	\]
	Let \(f\in B^p\). By the previous lemma, there exists a countable subset
	\(J_f\subset J\) such that \(f=0\) almost everywhere outside $\bigcup_{j\in J_f}D_j$.
	If \(j\notin J_f\), then $\int_{D_j}|f|^p\,d\mu=0$.
	Indeed,
	\[
	D_j\cap \bigcup_{k\in J_f}D_k
	=
	\bigcup_{k\in J_f}(D_j\cap D_k)
	\]
	has measure zero, because \(J_f\) is countable and the family \((D_j)_{j\in J}\)
	is essentially disjoint. Hence \(T(f)\) has countable support. Moreover,
	\[
	\|T(f)\|^p
	=
	\sum_{j\in J}\int_{D_j}|f|^p\,d\mu
	=
	\sum_{j\in J_f}\int_{D_j}|f|^p\,d\mu.
	\]
	Since the family \((D_j)_{j\in J_f}\) is countable and essentially disjoint,
	\[
	\sum_{j\in J_f}\int_{D_j}|f|^p\,d\mu
	=
	\int_{\bigcup_{j\in J_f}D_j}|f|^p\,d\mu.
	\]
	As \(f=0\) almost everywhere outside this union,
	\[
	\int_{\bigcup_{j\in J_f}D_j}|f|^p\,d\mu
	=
	\int_{\mathbb R^{\mathbb N}}|f|^p\,d\mu.
	\]
	Therefore $\|T(f)\|=\|f\|_{L^p(\mu)}$ and \(T\) is an isometry. It remains to prove that \(T\) is onto. Let
	\[
	(g_j)_{j\in J}\in
	\left(\bigoplus_{j\in J}L^p(D_j,\mu)\right)_p.
	\]
	The support $J_g:=\{j\in J:\|g_j\|_{L^p(D_j)}\neq0\}$
	is countable. Write $J_g=\{j_1,j_2,\ldots\}$ and define a disjoint modification of this countable subfamily by $D'_{j_1}:=D_{j_1}$,
	and, for \(k\geq2\),
	\[
	D'_{j_k}:=
	D_{j_k}\setminus\bigcup_{\ell<k}D_{j_\ell}.
	\]
	Then the sets \(D'_{j_k}\) are pairwise disjoint and $\mu(D_{j_k}\setminus D'_{j_k})=0$ for every $k$.
	Thus \(L^p(D_{j_k},\mu)\) is naturally identified with \(L^p(D'_{j_k},\mu)\). Choose representatives of the functions \(g_{j_k}\) and define
	\[
	f:=\sum_{k=1}^\infty g_{j_k}\chi_{D'_{j_k}}.
	\]
	Since the sets \(D'_{j_k}\) are disjoint,
	\[
	\int_{\mathbb R^{\mathbb N}}|f|^p\,d\mu
	=
	\sum_{k=1}^\infty
	\int_{D'_{j_k}}|g_{j_k}|^p\,d\mu
	=
	\sum_{k=1}^\infty
	\|g_{j_k}\|_{L^p(D_{j_k})}^p
	<\infty.
	\]
	Therefore \(f\in L^p(\mu)\). We claim that \(f\in B^p\). Let \(a\in I\). Then
	\[
	\int_{\mathcal{C}_a}|f|^p\,d\mu
	=
	\sum_{k=1}^\infty
	\int_{\mathcal{C}_a\cap D'_{j_k}}|g_{j_k}|^p\,d\mu.
	\]
	Since \(D'_{j_k}\subset D_{j_k}\) and \(D_{j_k}\in\mathscr I\), we have $\mu(\mathcal{C}_a\cap D'_{j_k})=0$.
	Consequently,
	\[
	\int_{\mathcal{C}_a\cap D'_{j_k}}|g_{j_k}|^p\,d\mu=0
	\]
	for every \(k\), and therefore $\int_{\mathcal{C}_a}|f|^p\,d\mu=0$.
	Since this holds for every \(a\in I\), we get \(f\in B^p\). Finally, for every \(k\), $f|_{D_{j_k}}=g_{j_k}$ in $L^p(D_{j_k},\mu)$,
	because \(D_{j_k}\setminus D'_{j_k}\) is null. For \(j\notin J_g\), both
	components are zero. Hence \(T(f)=(g_j)_{j\in J}\). This proves that \(T\) is
	onto.
\end{proof}

We next compute the cardinality of \(J\).

\begin{lemma}
	It holds that $|J|=\mathfrak c$.
\end{lemma}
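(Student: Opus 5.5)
We prove the two inequalities $|J|\geq\mathfrak c$ and $|J|\leq\mathfrak c$ separately.

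\emph{Lower bound.} We use the almost disjoint family $(C_A)_{A\in\mathcal A}\subset\mathscr I$ from \cite[Theorem 3.6]{RS}, with $|\mathcal A|=\mathfrak c$, $\mu(C_A)=1$ and $\mu(C_A\cap C_B)=0$ for $A\neq B$. By the lemma giving \eqref{E1}, for each $A\in\mathcal A$ there is a countable set $J_{C_A}\subset J$ with $\mu(C_A\setminus\bigcup_{j\in J_{C_A}}D_j)=0$. Since $\mu(C_A)=1>0$, the set $J_{C_A}$ is nonempty.

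Conversely, fix $j\in J$ and consider $\{A\in\mathcal A: j\in J_{C_A}\}=\{A:\mu(C_A\cap D_j)>0\}$. The sets $C_A\cap D_j$ are essentially disjoint subsets of $D_j$, and $\mu(D_j)<\infty$. The counting argument from the proof of that lemma (the sets with $\mu(C_A\cap D_j)>1/m$ are finite for each $m$) shows this set is countable. Hence the map $A\mapsto J_{C_A}$ sends $\mathcal A$ into countable subsets of $J$, and each $j$ lies in only countably many of the images. This gives
\[
\mathfrak c=|\mathcal A|\leq\Bigl|\bigcup_{A\in\mathcal A}J_{C_A}\Bigr|\cdot\aleph_0 .
\]
If $J$ were finite or countable, the right-hand side would be at most $\aleph_0$, a contradiction. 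So $J$ is infinite and $|J|\cdot\aleph_0=|J|$, whence $|J|\geq\mathfrak c$.

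\emph{Upper bound.} The natural route is cardinality: essentially disjoint sets of positive measure are pairwise distinct as elements of $\mathcal B_\infty$ (indeed, distinct modulo null sets). Therefore $j\mapsto D_j$ is injective into $\mathcal B_\infty$. Since $\mathbb R^{\mathbb N}$ with the product topology is a Polish space, hence second countable, the standard transfinite induction over $\omega_1$ gives $|\mathcal B_\infty|=\mathfrak c$. Thus $|J|\leq\mathfrak c$.

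The main obstacle is really only the lower bound, namely checking that $\mu(C_A\cap D_j)>0$ for only countably many $A$. This is exactly the finite-measure counting argument already used in the proof of \eqref{E1}, now applied with $D_j$ in place of $E$. Everything else is routine cardinal arithmetic together with the ZFC fact $|\mathcal B_\infty|=\mathfrak c$.
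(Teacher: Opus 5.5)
Your proof is correct and follows essentially the same route as the paper. For the upper bound you inject $j\mapsto D_j$ into $\mathcal B_\infty$ and use $|\mathcal B_\infty|=\mathfrak c$; for the lower bound you show each $C_A$ meets some $D_j$ in positive measure while each $D_j$ meets only countably many $C_A$ that way, which gives $\mathfrak c\le|J|\cdot\aleph_0$. The only cosmetic difference is that you get the existence of such a $j$ from the lemma giving \eqref{E1} rather than from the maximality of $\mathcal D$ directly. This amounts to the same thing, since your identification $\{A: j\in J_{C_A}\}=\{A:\mu(C_A\cap D_j)>0\}$ uses the explicit choice $J_E=\{j:\mu(E\cap D_j)>0\}$ from that lemma's proof.
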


\begin{proof}
	First, since each \(D_j\) belongs to \(\mathcal B_\infty\), and since
	\(|\mathcal B_\infty|=\mathfrak c\), we have $|J|\leq\mathfrak c$.
	We prove the reverse inequality. By the construction in the proof of \cite[Theorem 3.6]{RS}, there exists an almost disjoint family $\mathcal A\subset[\mathbb N]^\omega$
	with \(|\mathcal A|=\mathfrak c\), and a family of measurable sets $(C_A)_{A\in\mathcal A}$
	such that $C_A\in\mathscr I$, $\mu(C_A)=1$,
	and $\mu(C_A\cap C_B)=0$ for $A\neq B$.
	Indeed, these are the invisible rectangles obtained by taking
	\(C_A=I_A^{\mathbb N}\) from the almost disjoint family construction.
	
	By maximality of \((D_j)_{j\in J}\), for every \(A\in\mathcal A\) there exists
	\(j\in J\) such that $\mu(C_A\cap D_j)>0$.
	Otherwise \(C_A\) would be essentially disjoint from every \(D_j\), and we could
	add it to \(\mathcal D\), contradicting maximality.
	
	For \(j\in J\), set $\mathcal A_j:=
	\{A\in\mathcal A:\mu(C_A\cap D_j)>0\}$.
	We claim that each \(\mathcal A_j\) is countable. Indeed, the sets $C_A\cap D_j$, $A\in\mathcal A_j$,
	are essentially disjoint measurable subsets of \(D_j\), and \(\mu(D_j)<\infty\).
	Hence, for every \(m\in\mathbb N\), the set $\{A\in\mathcal A_j:\mu(C_A\cap D_j)>1/m\}$
	is finite. Therefore \(\mathcal A_j\) is countable. Since every \(A\in\mathcal A\) belongs to some \(\mathcal A_j\), we have $\mathcal A=\bigcup_{j\in J}\mathcal A_j$.
	Thus $\mathfrak c=|\mathcal A|
	\leq
	|J|\cdot\aleph_0$.
	It follows that \(|J|\geq\mathfrak c\). Combined with \(|J|\leq\mathfrak c\),
	this gives $|J|=\mathfrak c$.
\end{proof}

We are now ready to identify \(B^p\).

\begin{theorem}
	\label{Th3.2}
	For every \(1\leq p<\infty\), $B^p\cong \ell^p(\mathfrak c,L^p[0,1])$.
\end{theorem}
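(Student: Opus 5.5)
By Theorem~\ref{Th3.1} and the preceding lemma ($|J|=\mathfrak c$), we already have $B^p\cong\bigl(\bigoplus_{j\in J}L^p(D_j,\mu)\bigr)_p$ with $|J|=\mathfrak c$. The remaining task is to identify each summand $L^p(D_j,\mu)$. We do not expect each one to be exactly $L^p[0,1]$, but we aim to show it is isometric to $\ell^p(K_j,L^p[0,1])$ for some countable nonzero cardinal $K_j\in\{1,2,\dots,\aleph_0\}$. Since $\mathfrak c\cdot K_j$-fold sums collapse, this gives
\[
\Bigl(\bigoplus_{j\in J}\ell^p(K_j,L^p[0,1])\Bigr)_p\cong \ell^p\Bigl(\textstyle\sum_j K_j,\,L^p[0,1]\Bigr)=\ell^p(\mathfrak c,L^p[0,1]),
\]
because $\mathfrak c\le\sum_j K_j\le \mathfrak c\cdot\aleph_0=\mathfrak c$. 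Associativity of $\ell^p$-sums is a routine reindexing of the double sum.

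To identify $L^p(D_j,\mu)$, the plan is to use Carathéodory's isomorphism theorem: a finite measure space whose measure algebra is separable (countably generated modulo null sets) and nonatomic is isomorphic, as a measure algebra, to $[0,\mu(D_j)]$ with Lebesgue measure. Rescaling then gives $L^p(D_j,\mu)\cong L^p[0,1]$ isometrically. Two facts are needed.

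\emph{Nonatomicity.} $D_j$ has finite measure, hence is $\sigma$-finite, and the introduction recalls from \cite{RS} that the restriction of $\mu$ to every $\sigma$-finite measurable set is purely nonatomic. So $\mu|_{D_j}$ is nonatomic.

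\emph{Separability.} This is the main obstacle. $\mathcal B_\infty$ is countably generated by cylinders $\prod_{i\le m}A_i\times\mathbb R^{\mathbb N}$ with rational intervals $A_i$. However, $\mu$ of such cylinders is typically infinite or undefined, and the invisible part of $D_j$ is not detected by cylinders in the usual way, so the standard approximation argument does not apply directly. My first attempt is to use the outer-measure definition. Cover $D_j$ up to $\varepsilon$ by countably many finite rectangles $R_k$. Then show that $\mu|_{D_j}$-measurable sets are approximated in measure by finite unions of sets of the form $D_j\cap R$, where $R$ ranges over a countable family of rectangles whose sides are finite unions of rational intervals and whose product structure is controlled coordinatewise. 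Whether this countable family can be chosen uniformly is unclear.

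If this fails, I would fall back on Maharam's theorem for the finite (hence localizable) measure space $(D_j,\mu|_{D_j})$. This gives $L^p(D_j)\cong\bigl(\bigoplus_{\kappa}L^p(\{0,1\}^{\kappa})\bigr)_p$, a countable sum over Maharam types $\kappa$. I would then bound the Maharam types by the cardinality of the generating family, with the aim of showing they are all $\aleph_0$. Either route gives $L^p(D_j)\cong\ell^p(K_j,L^p[0,1])$ with $K_j$ countable, possibly after splitting $D_j$ into countably many homogeneous pieces, which is exactly what the reduction above requires. The bulk of the effort will go into ruling out uncountable Maharam type on these invisible sets.
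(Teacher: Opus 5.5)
Your reduction to the individual summands is correct: Theorem~\ref{Th3.1}, $|J|=\mathfrak c$, nonatomicity of $\mu|_{D_j}$ from \cite{RS}, and the cardinal arithmetic all work. The gap is exactly where you flag the ``main obstacle''. You never establish that $L^p(D_j,\mu)$ is separable, or equivalently that the measure algebra of $\mu|_{D_j}$ has countable Maharam type. Your rectangle-covering attempt stops at an ``unclear'' uniformity step. The Maharam fallback ends with ``ruling out uncountable Maharam type,'' which restates what is needed rather than proving it. Without this step Carath\'eodory's theorem cannot be invoked, and nothing excludes a summand such as $L^p(\{0,1\}^{\omega_1})$, which would destroy the conclusion. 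The paper closes this step by citing \cite[Lemma 2.3]{RS}.

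The missing argument is short, and your worry about cylinders having infinite $\mu$-measure is a red herring. Apply the standard approximation lemma to the finite measure $\nu_j(E):=\mu(E\cap D_j)$, $E\in\mathcal B_\infty$, rather than to $\mu$ itself. Write $\pi_n$ for the coordinate projections, and let $\mathcal A_0$ be the algebra generated by the sets $\pi_n^{-1}((q,r))$ with $n\in\mathbb N$ and $q<r$ rational. This algebra is countable and generates $\mathcal B_\infty$.

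Consider the family of $E\in\mathcal B_\infty$ such that for every $\varepsilon>0$ there is $A\in\mathcal A_0$ with $\nu_j(E\triangle A)<\varepsilon$. This family contains $\mathcal A_0$ and is closed under complements. It is also closed under countable unions: finiteness of $\nu_j$ lets you cut a countable union down to a finite one up to $\varepsilon/2$. Hence the family is a $\sigma$-algebra, so it equals $\mathcal B_\infty$. The argument is purely abstract, so the invisibility of $D_j$ plays no role.

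Since $\|\chi_{E\cap D_j}-\chi_{A\cap D_j}\|_p^p=\nu_j(E\triangle A)$, rational linear combinations of the functions $\chi_{A\cap D_j}$, $A\in\mathcal A_0$, are dense in $L^p(D_j,\mu)$. Separability together with nonatomicity then lets Carath\'eodory's theorem give $L^p(D_j,\mu)\cong L^p[0,1]$ directly. Your hedge with countable multiplicities $K_j$ is therefore unnecessary, though harmless. The Maharam detour is also unnecessary: countable Maharam type follows from the same countable-generation argument.
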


\begin{proof}
	By Theorem \ref{Th3.1},
	\[
	B^p\cong
	\left(\bigoplus_{j\in J}L^p(D_j,\mu)\right)_p.
	\]
	For every \(j\in J\), we have \(0<\mu(D_j)<\infty\). By \cite[Lemma 2.3]{RS}, \(L^p(D_j,\mu)\) is separable, and by \cite[Proposition A.5]{RS}, the restricted measure space \((D_j,\mu|_{D_j})\) is purely nonatomic. Hence, by the classical isometric classification of finite separable nonatomic \(L^p\)-spaces, see for instance \cite{La}, we have $L^p(D_j,\mu)\cong L^p[0,1]$
	isometrically. Since \(|J|=\mathfrak c\), it follows that $B^p\cong \ell^p(\mathfrak c,L^p[0,1])$.
\end{proof}

Combining the classification of the visible and invisible parts, we obtain the
following consequence.

\begin{theorem}
	For every \(1\leq p<\infty\), $L^p(\mathbb R^{\mathbb N},\mu)
	\cong
	\ell^p(\mathfrak c,L^p[0,1])$.
\end{theorem}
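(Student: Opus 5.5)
The plan is to combine the \(p\)-orthogonal decomposition of Section~2 with Theorem~\ref{Th3.2}. By the two lemmas of Section~2, \(P=\mathcal S\mathcal R\) is a contractive projection with \(\operatorname{Im}P=G^p\) and \(\ker P=B^p\). Moreover, \(\|g+b\|^p=\|g\|^p+\|b\|^p\) for \(g\in G^p\) and \(b\in B^p\). Hence the map \(L^p(\mu)\to G^p\oplus_p B^p\), \(f\mapsto (Pf,f-Pf)\), is a surjective linear isometry onto the \(\ell^p\)-sum of the two pieces. This gives
\[
L^p(\mu)\cong \bigl(G^p\oplus B^p\bigr)_p .
\]

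Next, I insert the two identifications:
\begin{itemize}
\item \(G^p\cong\ell^p(\mathfrak c,L^p[0,1])\), obtained at the end of Section~2 from \(|I|=\mathfrak c\) and \(L^p(\mathcal C_a,\mu)\cong L^p[0,1]\);
\item \(B^p\cong\ell^p(\mathfrak c,L^p[0,1])\), by Theorem~\ref{Th3.2}.
\end{itemize}
An \(\ell^p\)-direct sum of isometries is again an isometry between the \(\ell^p\)-sums. Therefore
\[
L^p(\mu)\cong \bigl(\ell^p(\mathfrak c,L^p[0,1])\oplus\ell^p(\mathfrak c,L^p[0,1])\bigr)_p\cong \ell^p(\mathfrak c\sqcup\mathfrak c,L^p[0,1]).
\]
The second isometry comes from regrouping the indices, using associativity of \(\ell^p\)-sums: \(\sum_{j\in J_1}\|x_j\|^p+\sum_{j\in J_2}\|x_j\|^p=\sum_{j\in J_1\sqcup J_2}\|x_j\|^p\).

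Finally, since \(\mathfrak c+\mathfrak c=\mathfrak c\), I fix a bijection \(\mathfrak c\sqcup\mathfrak c\to\mathfrak c\). Reindexing along it gives an isometric isomorphism onto \(\ell^p(\mathfrak c,L^p[0,1])\), which proves the statement.

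I do not expect a real obstacle here, since all the substantive work sits in Theorem~\ref{Th3.1}, the cardinality lemma \(|J|=\mathfrak c\), and Theorem~\ref{Th3.2}. The only point needing a line of care is that the algebraic direct sum \(G^p\oplus B^p\) is isometrically the \(\ell^p\)-sum. This is exactly the displayed \(p\)-orthogonality, which rests on \(g\) and \(b\) being disjointly supported modulo null sets.
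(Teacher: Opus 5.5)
Your proposal is correct and follows the same route as the paper. You combine the $p$-orthogonal decomposition $L^p(\mu)=G^p\oplus_p B^p$ with the identifications $G^p\cong B^p\cong\ell^p(\mathfrak c,L^p[0,1])$ and use $\mathfrak c+\mathfrak c=\mathfrak c$, merely spelling out the isometry $f\mapsto(Pf,f-Pf)$ and the reindexing that the paper leaves implicit.
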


\begin{proof}
	We have shown that $L^p(\mathbb R^{\mathbb N},\mu)=G^p\oplus_p B^p$,
	with $G^p\cong \ell^p(\mathfrak c,L^p[0,1])$ and $B^p\cong \ell^p(\mathfrak c,L^p[0,1])$.
	Therefore
	\[
	L^p(\mathbb R^{\mathbb N},\mu)
	\cong
	\ell^p(\mathfrak c,L^p[0,1])
	\oplus_p
	\ell^p(\mathfrak c,L^p[0,1]).
	\]
	Since \(\mathfrak c+\mathfrak c=\mathfrak c\), the right-hand side is
	isometrically isomorphic to $\ell^p(\mathfrak c,L^p[0,1])$.
	This proves the theorem.
\end{proof}

\end{document}